\newcommand{\sD}{\mathcal{D}}
\newcommand{\sF}{\mathcal{F}}
\newcommand{\sG}{\mathcal{G}}
\newcommand{\sH}{\mathcal{H}}
\newcommand{\sK}{\mathcal{K}}
\newcommand{\sO}{\mathcal{O}}
\newcommand{\sX}{\mathcal{X}}
\newcommand{\F}{\mathbb{F}}
\newcommand{\G}{\mathbb{G}}
\renewcommand{\H}{\mathbb{H}}
\newcommand{\Q}{\mathbf{Q}}
\newcommand{\Z}{\mathbf{Z}}
\newcommand{\DM}{\operatorname{\bf DM}}
\newcommand{\Sm}{\operatorname{\bf Sm}}
\newcommand{\Hom}{\operatorname{Hom}}
\newcommand{\uHom}{\operatorname{\underline{Hom}}}
\newcommand{\car}{\operatorname{char}}
\newcommand{\Tor}{\operatorname{Tor}}
\newcommand{\Ker}{\operatorname{Ker}}
\newcommand{\Coker}{\operatorname{Coker}}
\newcommand{\Spec}{\operatorname{Spec}}
\newcommand{\Zar}{{\operatorname{Zar}}}
\newcommand{\Nis}{{\operatorname{Nis}}}
\newcommand{\et}{{\operatorname{\acute{e}t}}}
\newcommand{\eff}{{\operatorname{eff}}}
\newcommand{\by}{\xrightarrow}
\newcommand{\yb}{\xleftarrow}
\newcommand{\iso}{\by{\sim}}
\newcommand{\osi}{\yb{\sim}}
\newenvironment{thlist}{\begin{list}{\rm{(\roman{enumi})}}%
{\usecounter{enumi}}}%
{\end{list}}
\newtheorem{thm}{Theorem}[section]
\newtheorem{prop}[thm]{Proposition}
\newtheorem{lemma}[thm]{Lemma}
\newtheorem{cor}[thm]{Corollary}
\newtheorem{conj}[thm]{Conjecture}
\theoremstyle{definition}
\newtheorem{defn}[thm]{Definition}
\theoremstyle{remark}
\newtheorem{rks}[thm]{Remarks}
\numberwithin{equation}{section}
\begin{document}

\title{Divisibility properties of motivic cohomology}
\author{Bruno Kahn}
\address{IMJ-PRG\\Case 247\\4 place Jussieu\\75252
Paris Cedex 05\\France}
\email{bruno.kahn@imj-prg.fr}
\date{January 6, 2018}

\begin{abstract}
We extend results of Colliot-Th\'el\`ene and Raskind on the $\sK_2$-cohomology of smooth projective varieties over a separably closed field $k$ to the \'etale motivic cohomology of smooth, not necessarily projective, varieties over $k$. Some consequences are drawn, such as the degeneration of the Bloch-Lichtenbaum spectral sequence for any field containing $k$.
\end{abstract}
\subjclass[2010]{14F42,19E15}
\maketitle

\section*{Introduction} In \cite{ctr}, Colliot-Th\'el\`ene and Raskind study the structure of the $\sK_2$-cohomology groups of a smooth projective variety $X$ over a separably closed field. Following arguments of Bloch \cite{bloch}, their proofs use the Weil conjecture proven by Deligne \cite{weilI} and the Merkurjev-Suslin theorem \cite{ms}. These results and proofs can be reformulated in terms of motivic cohomology, since
\[H^i(X,\sK_2)\simeq H^{i+2}(X,\Z(2))\]
or even in terms of \'etale motivic cohomology, since
\[H^j(X,\Z(2))\iso H^j_\et(X,\Z(2))\text{ for }j\le 3\]
as follows again from the Merkurjev-Suslin theorem. If we work in terms of \'etale motivic cohomology, the recourse to the latter theorem is irrelevant and only the results of \cite{weilI} are needed; in this form, the results of \cite{ctr} and their proofs readily extend to \'etale motivic cohomology of higher weights, as in \cite[Prop. 1]{indec} and \cite[Prop. 1.3]{rs}.

In this note, we generalise these results to the \'etale motivic cohomology of smooth varieties over a separably closed field: see Theorem \ref{t2}. This could be reduced by a d\'evissage to the the smooth projective case, using de Jong's alteration theorem in the style of \cite{finiteness}, but it is simpler to reason directly by using cohomology with compact supports, and Weil II \cite{weilII} rather than Weil I \cite{weilI}. I thank H\'el\`ene Esnault and Eckart Viehweg for suggesting to use this approach. This descends somewhat to the case where the base field $k$ is not separably closed, yielding information on the Hochschild-Serre filtration on \'etale motivic cohomology (Theorem \ref{t2.1}). The rest of the note is concerned with implications on motivic cohomology of a field $K$ containing a separably closed field: the main result, which uses the norm residue isomorphism theorem of Voevodsky, Rost et al \cite{voeann}, is that $H^i(K,\Z(n))$ is divisible for $i\ne n$ (Theorem \ref{t3}). As an immediate consequence, the ``Bloch-Lichtenbaum'' spectral sequence of $K$ from motivic cohomology to algebraic $K$-theory degenerates (Theorem \ref{t4}).  We also show that the cokernel of the cup-product map 
\[H^{i-1}(K,\Z(n-1))\otimes K^*\to H^i(K,\Z(n))\]
is uniquely divisible for $i<n$ (Theorem \ref{t5}).

These results were mainly found in 2009, but I did not intend to make them public because I didn't find them sufficiently interesting. Luca Barbieri-Viale informed me that he could use some of them in forthcoming work with F. Andreatta and F. Bertapelle: I thank him for convincing me to turn them into the present preprint. 

In all this note, motivic cohomology is understood in the sense of Suslin and Voevodsky (hypercohomology of the Suslin-Voevodsly complexes \cite{suvo}).

\section{A weight and coniveau argument}

Let $X$ be a separated scheme of finite type over a finitely generated field $k$. 

\begin{prop}\label{p2.1} Let $n\in\Z$, $k_s$ a separable closure of $k$ and $G=Gal(k_s/k)$.
Let $\bar X=X\otimes_k k_s$. Then $H^j_c(\bar X,\Z_l(m))^G$ and $H^j_c(\bar X,\Z_l(m))_G$ are finite for $j\notin [2m,m+d]$ and any  prime number $l$ invertible in $k$, where $d=\dim X$.
\end{prop}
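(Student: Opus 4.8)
The plan is to deduce the statement from the theory of weights. Write $M=H^j_c(\bar X,\Z_l(m))$. Since $\bar X$ is of finite type over the separably closed field $k_s$ and $l$ is invertible there, the finiteness theorem for compactly supported \'etale cohomology shows that $M$ is a finitely generated $\Z_l$-module; set $V=M\otimes_{\Z_l}\Q_l=H^j_c(\bar X,\Q_l(m))$, a finite-dimensional continuous $\Q_l$-representation of $G$. As $\Q_l$ is flat over $\Z_l$, passing to $G$-invariants gives an injection $M^G\otimes_{\Z_l}\Q_l\hookrightarrow V^G$ and passing to $G$-coinvariants an isomorphism $M_G\otimes_{\Z_l}\Q_l\simeq V_G$; since $M^G$ and $M_G$ are finitely generated over $\Z_l$, each is finite as soon as $V^G=0$, resp.\ $V_G=0$. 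Hence it suffices to prove
\[V^G=V_G=0\qquad\text{whenever }j\notin[2m,m+d].\]

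Now spread out. As $k$ is finitely generated there is a normal connected affine scheme $S$ --- of finite type over the prime field if $\car k=p$, of finite type over $\Z$ if $\car k=0$ --- with function field $k$ and with $l$ invertible on $S$, and its closed points have finite residue fields. Choose a separated model $f\colon\sX\to S$ of finite type with $\sX\times_S\Spec k=X$ and fibres of dimension $\le d$. Shrinking $S$ (finitely often, as $0\le j\le 2d$) we may assume each $R^jf_!\Z_l$ is a lisse $\Z_l$-sheaf; by the base change theorem for $Rf_!$ its stalk at the geometric generic point is $H^j_c(\bar X,\Z_l)$. Thus $V$ is a twist of the stalk at $\bar\eta$ of a lisse $\Q_l$-sheaf on $S$, and the $G$-action on $V$ factors through the surjection $G=\pi_1(\eta)\twoheadrightarrow\pi_1(S)$; in particular $V^G=V^{\pi_1(S)}$ and $V_G=V_{\pi_1(S)}$.

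Finally invoke Weil II \cite{weilII}. For a closed point $s\in S$ with residue field $\F_q$, proper base change for $Rf_!$ identifies $V$, compatibly with the geometric Frobenius $\mathrm{Frob}_s$, with $H^j_c(\overline{\sX_s},\Q_l(m))$, where $\sX_s$ is the fibre of $f$ over $\F_q$, of dimension $\le d$. Deligne's theorem gives that $H^j_c(\overline{\sX_s},\Q_l)$ is mixed of weights $\le j$, and dually --- by stratifying $\sX_s$ into smooth equidimensional locally closed subschemes, using the long exact sequences in compactly supported cohomology, Poincar\'e duality, and the bounds $i\le w\le 2i$ for the weights $w$ of $H^i$ of a smooth variety --- it is mixed of weights $\ge 2(j-d)$. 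Therefore $V$ is mixed of weights in $[2(j-d)-2m,\,j-2m]$. If $j<2m$ all these weights are $<0$; if $j>m+d$ all of them are $>0$; so for $j\notin[2m,m+d]$ the integer $0$ is never a weight of $V$, and $1$ is never an eigenvalue of $\mathrm{Frob}_s$ on $V$ (weight $\ne 0$ forces absolute value $\ne 1$). As $S$ is Jacobson it has closed points, so $V$ can have neither a trivial $\pi_1(S)$-subrepresentation nor a trivial $\pi_1(S)$-quotient, i.e.\ $V^{\pi_1(S)}=V_{\pi_1(S)}=0$. Together with the first two paragraphs this proves the proposition.

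The step I expect to cost the most care is the lower weight bound $\ge 2(j-d)$ for the compactly supported cohomology of the possibly singular fibre $\sX_s$: this is where the d\'evissage to smooth strata and Poincar\'e duality enter, and where one should keep track that the spreading-out and base-change statements can be arranged simultaneously for the finitely many relevant $j$. Everything else is a formal unwinding of the weight formalism, in the spirit of Bloch's and Colliot-Th\'el\`ene--Raskind's treatment of the $\sK_2$ case.
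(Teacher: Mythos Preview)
Your overall architecture matches the paper's: reduce to showing the $\Q_l$-representation $V=H^j_c(\bar X,\Q_l(m))$ has no trivial $G$-sub or -quotient, spread $X$ out over an arithmetic base $S$, and specialise to closed points with finite residue field. The difference is in how the \emph{lower} eigenvalue bound is obtained. The paper does not use an archimedean lower bound on weights; instead it invokes Deligne's integrality theorem \cite[Cor.~5.5.3]{sga7}, which says that the Frobenius eigenvalues on $H^j_c(\overline{\sX_s},\Q_l)$ are algebraic integers divisible (as algebraic integers) by $q^{j-d}$. For the eigenvalue $q^m$ this forces $m\ge j-d$, i.e.\ $j\le m+d$. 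Only the other inequality $j\ge 2m$ uses Weil~II (archimedean absolute value $\le q^{j/2}$).

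Your alternative---an archimedean lower weight bound $\ge 2(j-d)$ on $H^j_c$, via stratification into smooth pieces and Poincar\'e duality---does yield the same range, but it hinges on the assertion that $H^i$ of a smooth \emph{open} variety has weights $\le 2i$. That upper bound is not a direct corollary of Weil~II: Deligne's main theorem bounds weights of $Rf_!$ from above, which via Poincar\'e duality gives only the \emph{lower} bound $\ge i$ on $H^i$ of a smooth variety. Getting $\le 2i$ requires comparing with a smooth proper compactification (or controlling $Rj_*\Q_l$ along a compactification), and in positive characteristic this is where one typically appeals to de Jong's alterations---precisely the ingredient the paper's introduction says it wants to avoid. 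So your instinct that this is ``the step costing the most care'' is right, and the integrality theorem is exactly the device that lets the paper bypass it.
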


\begin{proof} Suppose first that $k=\F_q$ is finite. By \cite[Cor. 5.5.3 p. 394]{sga7}, the
eigenvalues of Frobenius acting on $H^j_c(\bar X,\Q_l)$ are algebraic integers which are
divisible by $q^{j-d}$ if $j\ge d$. This yields the necessary bound $m\ge j-d$ for an
eigenvalue $1$. On the other hand, by \cite{weilII}, these eigenvalues have archimedean absolute
values $\le q^{j/2}$: this gives the necessary bound $m\le j/2$ for an eigenvalue $1$. 
The conclusion follows.

In general, we may choose a regular model $S$ of $k$, of finite type over $\Spec \Z$, such that
$X$ extends to a compactifiable separated morphism of finite type $f:\sX\to S$. By
\cite[lemma 2.2.2 p. 274 and 2.2.3 p. 277]{sga5}, $R^jf_!\Z_l$ is a constructible $\Z_l$-sheaf
on $S$ and its formation commutes with any base change. Shrinking $S$, we may assume that it is
locally constant and that $l$ is invertible on $S$. For a closed point $s\in S$, this gives an
isomorphism
\[H^j_c(\bar X,\Z_l)\simeq H^j_c(\bar X_s,\Z_l)\]
compatible with Galois action, and the result follows from the first case.
\end{proof}

\begin{cor}\label{c2.1} If $X$ is smooth in Proposition \ref{p2.1}, then $H^i(\bar
X,\Z_l(n))^{(G)}$  is finite for $i\notin [n,2n]$, where the superscript $^{(G)}$ denotes the subset of elements invariant under some open subgroup of $G=Gal(k/k_0)$. If $X$ is smooth projective, then $H^i(\bar X,\Z_l(n))^{(G)}$ is finite for $i\ne 2n$ and $0$ for almost all $l$.
\end{cor}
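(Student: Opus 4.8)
First I would reduce everything to Proposition \ref{p2.1} via Poincar\'e duality, passing to $\Q_l$-coefficients in order to sidestep torsion. The standing facts are: for $\bar X$ smooth separated of finite type over the separably closed field $k_s$, finiteness of $H^*_\et(\bar X,\Z/l^r)$ and vanishing of the relevant $\varprojlim^1$-terms show that each $H^i(\bar X,\Z_l(n))$ is a finitely generated $\Z_l$-module with continuous $G$-action, so in particular its torsion subgroup is finite; moreover one reduces at once to $\bar X$ connected of dimension $d$, since over a finite extension of $k$ the variety $X$ splits into geometrically connected pieces of dimension $\le d$, the cohomology decomposes accordingly and the bound produced below is the same for each, while enlarging $k$ only replaces $G$ by an open subgroup --- which is harmless, the superscript $^{(G)}$ seeing precisely the open subgroups.

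Now write $A=H^i(\bar X,\Z_l(n))$. Poincar\'e duality gives a $G$-equivariant perfect pairing of $\Q_l$-vector spaces
\[
H^i(\bar X,\Q_l(n))\times H^{2d-i}_c(\bar X,\Q_l(d-n))\longrightarrow \Q_l ,
\]
so $H^i(\bar X,\Q_l(n))^G$ is dual to $H^{2d-i}_c(\bar X,\Q_l(d-n))_G=H^{2d-i}_c(\bar X,\Z_l(d-n))_G\otimes_{\Z_l}\Q_l$. By Proposition \ref{p2.1} with $j=2d-i$ and $m=d-n$ this $\Z_l$-module is finite, hence dies after $\otimes\Q_l$, as soon as $2d-i\notin[2(d-n),(d-n)+d]$, i.e. $i\notin[n,2n]$; thus $A^G\otimes_{\Z_l}\Q_l=0$, which forces $A^G$ finite. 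Running the same argument over each finite extension of $k$ gives $A^U$ finite for every open $U\le G$, so $A^{(G)}=\bigcup_U A^U$ is a $\Z_l$-submodule of $A$ consisting of torsion elements; being contained in the finite torsion subgroup of $A$, it is finite. For $X$ smooth projective one has $H^*_c(\bar X,-)=H^*(\bar X,-)$, so Proposition \ref{p2.1} applied directly to $H^i(\bar X,\Z_l(n))$ also gives $H^i(\bar X,\Q_l(n))^G=0$ for $i\notin[2n,n+d]$; together with $i\notin[n,2n]$ from duality this covers every $i\ne 2n$, and we conclude as above that $H^i(\bar X,\Z_l(n))^{(G)}$ is finite for $i\ne 2n$. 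Finally, for all but finitely many $l$ the group $H^i(\bar X,\Z_l)$, hence $H^i(\bar X,\Z_l(n))$, is torsion-free (standard, via spreading-out and smooth proper base change reducing to a comparison with Betti cohomology in characteristic $0$), so the torsion group $H^i(\bar X,\Z_l(n))^{(G)}$ then vanishes.

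I do not expect a real obstacle: the content is entirely in Proposition \ref{p2.1}, the rest being bookkeeping. The points requiring attention are that the Poincar\'e pairing is $G$-equivariant for the stated Tate twists (which is why one twists by $(d)$, so that $H^{2d}_c(\bar X,\Q_l(d))\cong\Q_l$ as a $G$-module), that finiteness of $\Z_l$-(co)invariants really forces vanishing of the corresponding $\Q_l$-(co)invariants (for invariants this uses that the torsion subgroup of $A$ is finite), and --- the one genuinely indispensable input beyond the finiteness of each $A^U$ --- that $A$ is a finitely generated $\Z_l$-module, which is exactly what makes the increasing union $\bigcup_U A^U$ finite.
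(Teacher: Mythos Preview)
Your argument is correct and follows the paper's line closely: Poincar\'e duality reduces the first claim to Proposition~\ref{p2.1}, and then the fact that $H^i(\bar X,\Z_l(n))$ is a finitely generated $\Z_l$-module forces $H^i(\bar X,\Z_l(n))^{(G)}$ to lie in the finite torsion subgroup.

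There is one genuine (and pleasant) difference in the projective case. The paper invokes the Weil conjecture \cite{weilI} directly: purity says the Frobenius eigenvalues on $H^i(\bar X,\Q_l)$ have absolute value exactly $q^{i/2}$, so $1$ occurs as an eigenvalue only when $i=2n$. You instead apply Proposition~\ref{p2.1} twice---once directly (using $H^*_c=H^*$ for projective $X$), which gives finiteness for $i\notin[2n,n+d]$, and once via duality, which gives $i\notin[n,2n]$---and observe that these two excluded ranges meet only at $i=2n$. This stays entirely within the inputs already used for Proposition~\ref{p2.1} (Weil~II plus Deligne's integrality) and avoids a separate appeal to purity; the paper's route is shorter but uses a stronger input.

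One quibble: your parenthetical justification of torsion-freeness for almost all $l$ (``spreading-out and smooth proper base change reducing to a comparison with Betti cohomology in characteristic $0$'') works only when $\car k=0$; in positive characteristic the spread-out base has no characteristic-$0$ point to compare with. The result you need is Gabber's theorem \cite{gabber}, whose proof is more delicate. The paper simply cites Gabber, and you should too.
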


\begin{proof} By Poincar\'e duality and Proposition \ref{p2.1}, $H^i(\bar
X,\Z_l(n))^{G}$  is finite for $i\notin [n,2n]$; the claim follows since $H^i(\bar X,\Z_l(n)))$ is a finitely generated $\Z_l$-module. In the projective case, the Weil conjecture \cite{weilI} actually gives the finiteness of $H^i(\bar X,\Z_l(n))^{G}$, hence of $H^i(\bar X,\Z_l(n))^{(G)}$, for all $i\ne 2n$. But Gabber's theorem \cite{gabber} says that $H^i(\bar X,\Z_l(n))$ is torsion-free for almost all $l$, hence the conclusion.
\end{proof}

\begin{thm}\label{t2} Let $X$ be a smooth variety over a separably closed field $k$ of exponential characteristic $p$. Then, for $i\notin [n,2n]$, the group $H^i_\et(X,\Z(n))[1/p]$
is an extension of a direct sum $T$ of finite $l$-groups by a divisible group. If $X$ is projective, this is true for all $i\ne 2n$, and $T$ is finite. If $p>1$, $H^i_\et(X,\Z(n))$ is uniquely $p$-divisible for $i<n$. In particular, $H^i_\et(X,\Z(n))\otimes \Q/\Z=0$ for $i<n$.  For $i\le 1$, $H^i_\et(X,\Z(n))$ is
divisible. The sequence
\[0\to H^{i-1}_\et(X,\Q/\Z(n))\to H^{i}_\et(X,\Z(n))\to
H^{i}_\et(X,\Z(n))\otimes \Q\to 0\]
is exact for $i<n$.
\end{thm}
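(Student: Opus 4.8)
The plan is to deduce everything from Corollary \ref{c2.1} via the standard comparison between étale motivic cohomology with $\Z/\ell$-coefficients and $\ell$-adic étale cohomology, together with the universal coefficient sequences. First I would recall that, over a separably closed field $k$ of exponential characteristic $p$, for $\ell\ne p$ one has $H^i_\et(X,\Z/\ell^\nu(n))\simeq H^i_\et(X,\mu_{\ell^\nu}^{\otimes n})$ (this is the Suslin–Voevodsky rigidity / Beilinson–Lichtenbaum identification for torsion coefficients, valid with no restriction on $i$ since we are working étale-locally), and passing to the limit $H^i_\et(X,\widehat\Z(n))[1/p]\simeq \prod_{\ell\ne p}H^i_\et(X,\Z_\ell(n))$ up to the usual $\lim^1$, which vanishes because each $H^i_\et(X,\Z_\ell(n))$ is a finitely generated $\Z_\ell$-module. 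By Corollary \ref{c2.1}, for $i\notin[n,2n]$ each $H^i_\et(X,\Z_\ell(n))$ is finite (and zero for almost all $\ell$) — in the projective case this holds for all $i\ne 2n$ — so the product $T:=\bigoplus_{\ell\ne p}H^i_\et(X,\Z_\ell(n))$ is a direct sum of finite $\ell$-groups (finite in the projective case).

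Next I would exploit the short exact sequence relating $\Z(n)$ and $\widehat\Z(n)$: the cone of $\Z(n)\to R\!\lim \Z/m(n)$ is, rationally, the continuous-cohomology object, and concretely one has for each $i$ an exact sequence $0\to H^i_\et(X,\Z(n))^\wedge \to H^i_\et(X,\widehat\Z(n))\to (\text{a uniquely divisible, in fact }\Q\text{-vector space term})$, or more simply I would run the diagram built from $0\to\Z(n)\by{m}\Z(n)\to\Z/m(n)\to 0$. Taking the colimit over $m$ prime to $p$ gives the exact sequence
\[0\to H^{i-1}_\et(X,\Q/\Z(n))\to H^i_\et(X,\Z(n))\{\text{non-}p\}\to {}_{\text{tors}}(\text{--})\]
wait—more precisely, for $i<n$ the group $H^{i-1}_\et(X,\Z/m(n))\simeq H^{i-1}_\et(X,\mu_m^{\otimes n})$ is finite and, by Corollary \ref{c2.1} applied in degree $i-1\notin[n,2n]$, its inverse limit over $\ell$-powers is finite; then from $H^{i-1}_\et(X,\Z(n))\to H^{i-1}_\et(X,\Z/m(n))\to H^i_\et(X,\Z(n))[m]\to 0$ and the finiteness just established one gets that $H^i_\et(X,\Z(n))$ has no prime-to-$p$ torsion for $i<n$, i.e. $H^i_\et(X,\Z(n))\otimes\Q/\Z=0$ for $i<n$; combined with the $p$-divisibility statement (which comes from the fact that $\Z(n)$ has no $p$-torsion étale-locally when $p>1$, by the Geisser–Levine comparison with logarithmic de Rham–Witt, giving $H^i_\et(X,\Z(n))\otimes\Z/p=0$ for $i<n$ as well), one gets unique $p$-divisibility for $i<n$. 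For $i\le 1$, $H^i_\et(X,\Z(n))$ is divisible because $H^i_\et(X,\Z/m(n))$ vanishes in those degrees for $m$ prime to $p$ (the motivic complex $\Z/m(n)$ is concentrated in degrees $\ge 1$, or one uses that $H^0,H^1$ of $\mu_m^{\otimes n}$ over a point and a smooth curve behave accordingly) — I would check this last point carefully since it is degree-sensitive. The final exact sequence
\[0\to H^{i-1}_\et(X,\Q/\Z(n))\to H^{i}_\et(X,\Z(n))\to H^{i}_\et(X,\Z(n))\otimes\Q\to 0\]
for $i<n$ is then just the long exact sequence of $0\to\Z(n)\to\Q(n)\to\Q/\Z(n)\to 0$ together with the vanishing $H^{i-1}_\et(X,\Q(n))=0=H^i_\et(X,\Q(n))$ in the relevant range — no, rather one uses that $H^{i}_\et(X,\Z(n))\otimes\Q \iso H^{i}_\et(X,\Q(n))$ and that $H^{i-1}_\et(X,\Q(n))\to H^{i-1}_\et(X,\Q/\Z(n))$ is surjective because $H^i_\et(X,\Z(n))$ has no $\Q$-divisible-obstruction; I would phrase this via the standard fact that $H^{i-1}(\Q/\Z(n))$ is the torsion plus divisible part sitting in degree $i$.

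The \textbf{main obstacle} I anticipate is keeping the handling of $p$-torsion clean: the rigidity isomorphism with $\mu_m^{\otimes n}$ is only available for $m$ prime to $p$, so the $p$-primary part of the statement must be argued separately, using that $H^i_\et(X,\Z(n))$ is uniquely $p$-divisible for $i<n$ — this I expect to get from the fact that, for $X$ smooth over a perfect field of characteristic $p$, the mod-$p$ motivic complex $\Z/p(n)$ is quasi-isomorphic to $\nu(n)[-n]$ (logarithmic de Rham–Witt), which is concentrated in degree $n$, hence $H^i_\et(X,\Z/p^\nu(n))=0$ for $i<n$ by dévissage, forcing $H^i_\et(X,\Z(n))$ to be uniquely $p$-divisible there. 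The second delicate point is the $\lim^1$ and comparison bookkeeping needed to pass from the finitely-generated $\Z_\ell$-modules of Corollary \ref{c2.1} to the assertion about $H^i_\et(X,\Z(n))[1/p]$ itself being an extension of $T$ by a divisible group: one writes $H^i_\et(X,\Z(n))[1/p]$ with its profinite completion surjecting onto $\prod_\ell H^i_\et(X,\Z_\ell(n))$ with divisible kernel and cokernel, and reads off that the torsion subgroup maps isomorphically onto $T$ while the quotient is divisible. None of these steps is deep, but the interplay of the three coefficient systems ($\Z$, $\Q/\Z$, $\widehat\Z$) in a range of degrees requires care.
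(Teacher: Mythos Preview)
Your overall architecture matches the paper's: Corollary~\ref{c2.1} for the prime-to-$p$ part, and the Geisser--Levine comparison $\Z/p(n)_\et\simeq \nu(n)[-n]$ for the $p$-part. The $p$-primary argument you sketch is essentially what the paper invokes via \cite[Th.~8.4]{gl2}.

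However, there is a genuine gap in your prime-to-$p$ argument. You write that ``by Corollary~\ref{c2.1}, for $i\notin[n,2n]$ each $H^i_\et(X,\Z_\ell(n))$ is finite (and zero for almost all $\ell$)''. This is \emph{not} what Corollary~\ref{c2.1} says: it only asserts finiteness of $H^i(\bar X,\Z_\ell(n))^{(G)}$, the union of invariants under open subgroups of the Galois group of a finitely generated field of definition. The full group $H^i_\et(X,\Z_\ell(n))$ is typically a free $\Z_\ell$-module of positive rank (think of $H^1$ of an abelian variety for $n\ge 2$), so your $T:=\bigoplus_{\ell\ne p}H^i_\et(X,\Z_\ell(n))$ is not a sum of finite groups, and your comparison with the profinite completion does not yield the claimed structure. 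The ``zero for almost all $\ell$'' clause is likewise only asserted in the projective case.

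The missing step, which is the content of \cite[proof of Prop.~1]{indec} that the paper cites, is this: any class in $H^i_\et(X,\Z(n))$ descends to a smooth model $X_0$ over a finitely generated subfield $k_0\subset k$, hence its image in $H^i_\et(X,\mu_{\ell^\nu}^{\otimes n})\simeq H^i_\et(\bar X_0,\mu_{\ell^\nu}^{\otimes n})$ is fixed by an open subgroup of $Gal(k_{0,s}/k_0)$. Thus the image of the reduction map $H^i_\et(X,\Z(n))\to H^i_\et(X,\Z_\ell(n))$ lands in the finite group $H^i_\et(\bar X_0,\Z_\ell(n))^{(G)}$, and it is \emph{this} which forces $H^i_\et(X,\Z(n))_{(\ell)}$ to be an extension of a finite $\ell$-group by a divisible group. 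Without this descent observation, Corollary~\ref{c2.1} gives you no handle on the cokernel of multiplication by $\ell$ on $H^i_\et(X,\Z(n))$.
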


\begin{proof} 
Away from $p$, it is identical to \cite[proof of Prop. 1]{indec} (which is the projective case) in view of Corollary \ref{c2.1}. The unique $p$-divisibility of $H^i_\et(X,\Z(n))$ for $i<n$ follows from \cite[Th. 8.4]{gl2} and requires no hypothesis on $k$.
%
\end{proof}

\begin{cor}\label{c2.2} Let $K$ be a field containing a separably closed field $k$. Then,  for $i<n$, the sequence
\[0\to H^{i-1}_\et(K,\Q/\Z(n))\to H^{i}_\et(K,\Z(n))\to
H^{i}_\et(K,\Z(n))\otimes \Q\to 0\]
is exact and the left group has no $p$-torsion if $p=\car K$.
\end{cor}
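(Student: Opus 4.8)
The idea is to pass from the smooth-variety statement of Theorem \ref{t2} to the field $K$ by a colimit argument, and then to extract the short exact sequence from the divisibility information. First I would write $K$ as a filtered union of its finitely generated subextensions of $k$; each such subextension is the function field of a smooth affine $k$-variety $X$ (after shrinking to a smooth locus, which is harmless since $k$ is perfect — or one spreads out over the separable closure). Since étale motivic cohomology commutes with filtered colimits of rings (the Suslin--Voevodsky complexes are built from finite correspondences, and étale hypercohomology of a filtered colimit of smooth schemes is the colimit of the hypercohomologies), one gets $H^i_\et(K,\Z(n)) = \varinjlim_X H^i_\et(X,\Z(n))$, and likewise with $\Q/\Z(n)$ and $\Q(n)$ coefficients, the colimit being taken over the affine opens $X$ with $k(X)\hookrightarrow K$.

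Next I would invoke Theorem \ref{t2} for each such $X$. For $i<n$ we are in the range $i\notin[n,2n]$, so $H^i_\et(X,\Z(n))[1/p]$ is an extension of a finite group $T$ by a divisible group; moreover if $p>1$ then $H^i_\et(X,\Z(n))$ is uniquely $p$-divisible, so in fact $H^i_\et(X,\Z(n))$ itself (not just its prime-to-$p$ part) is an extension of a finite group of order prime to $p$ by a divisible group. The short exact sequence
\[0\to H^{i-1}_\et(X,\Q/\Z(n))\to H^{i}_\et(X,\Z(n))\to H^{i}_\et(X,\Z(n))\otimes \Q\to 0\]
is asserted in Theorem \ref{t2} for $i<n$. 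Now I take the filtered colimit of these sequences over $X$: filtered colimits are exact, so the sequence remains exact for $K$, giving the desired four-term statement (the outer zeros are preserved). The fact that $H^{i-1}_\et(K,\Q/\Z(n))$ has no $p$-torsion when $p=\car K$ follows from the last sentence of Theorem \ref{t2}: $H^{i-1}_\et(X,\Z(n))$ is uniquely $p$-divisible, hence $H^{i-1}_\et(X,\Z(n))\otimes\Q/\Z$ has no $p$-torsion, and since $H^{i-1}_\et(X,\Q/\Z(n))$ injects into $H^i_\et(X,\Z(n))$ — or more directly, identifying it with $H^{i-1}_\et(X,\Z(n))\otimes\Q/\Z$ via exactness one step down — it too has no $p$-torsion; passing to the colimit preserves this.

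The only genuinely delicate point is the continuity/colimit statement: one must know that étale motivic cohomology of the generic point is the colimit over smooth models, and that Theorem \ref{t2} (proved for $H^i_\et(X,\Z(n))$ with $X$ smooth over $k$) applies to each model. Continuity for motivic cohomology with $\Z$, $\Q/\Z$, and $\Q$ coefficients in the Nisnevich topology is standard (Suslin--Voevodsky), and for the étale topology one uses that over a separably closed field the difference between the two is controlled by torsion that is itself continuous, or simply that étale cohomology commutes with filtered inverse limits of schemes with affine transition maps. Once continuity is in hand the rest is a formal diagram chase through exact colimits, so I expect no further obstacle.
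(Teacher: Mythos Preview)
Your proposal is correct and follows essentially the same route as the paper: reduce to $K/k$ finitely generated, apply Theorem~\ref{t2} to smooth $k$-models of $K$, and pass to the filtered colimit. The paper's proof is a two-line version of exactly this, citing \cite[Prop.~2.1~b)]{bbki} for the continuity of \'etale motivic cohomology that you flag as the one delicate point; your more explicit extraction of the $p$-torsion-freeness from the unique $p$-divisibility in Theorem~\ref{t2} is fine (the quickest way is the one you mention second: $H^{i-1}_\et(X,\Q/\Z(n))$ injects into the $p$-torsion-free group $H^i_\et(X,\Z(n))$).
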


\begin{proof} We may assume $K/k$ finitely generated. By Theorem \ref{t2}, this is true for any smooth model of $K$ over $k$, and we pass to the limit (see \cite[Prop. 2.1 b)]{bbki}, or rather its proof, for the commutation of \'etale motivic cohomology with limits).
\end{proof}

\begin{rks} 1) At least away from $p$, the range of ``bad'' $i$'s in Corollary \ref{c2.1} and Theorem \ref{t2} is $[n,2n]$ in general but shrinks to $2n$ when $X$ is projective. If we remove a smooth closed subset, this range becomes
$[2n-1,2n]$. As the proof of Proposition \ref{p2.1} shows, it depends on the length of the weight filtration on $H^*(\bar X,\Q_l)$. If $X=Y-D$, where $Y$ is smooth projective and $D$ is a simple normal crossing divisor with $r$ irreducible components,  the range is $[2n-r,2n]$. It would be interesting to understand the optimal range in general, purely in terms of the geometry of $X$.\\ 
2) Using Proposition \ref{p2.1} or more precisely its proof, one may recover the $l$-local version of \cite[Th.
3]{finiteness} without a recourse to de Jong's alteration theorem. I don't see how to get the global
finiteness of loc. cit. with the present method, because one does not know whether the torsion of $H^j_c(\bar X,\Z_l)$ vanishes for $l$ large when $X$ is not smooth projective.\\ 
3) Using a cycle class map to Borel-Moore $l$-adic homology, one could use
Proposition \ref{p2.1} to extend Theorem \ref{t2} to higher Chow groups of arbitrary
separated $k$-schemes of finite type. Such a cycle class map was constructed in
\cite[\S 1.3]{glrev1}. Note that Borel-Moore $l$-adic cohomology is dual to $l$-adic cohomology with compact supports, so the bounds for finiteness are obtained from those of Proposition \ref{p2.1} by changing signs.
\end{rks}

\section{Descent}

\begin{thm}\label{t2.1} Let $X$ be a smooth variety over a field $k$; write $k_s$ for a separable closure of $k$, $X_s$ for $X\otimes_k k_s$ and $G$ for $Gal(k_s/k)$. For a complex of sheaves $C$ over $X_\et$, write $F^rH^i_\et(X,C)$ for the filtration on $H^i_\et(X,C)$ induced by the Hochschild-Serre spectral sequence
\[E_2^{r,s}(C)=H^r(G,H^s_\et(X_s,C))\Rightarrow H^{r+s}_\et(X,C).\]
Then, for $i<n$, the homomorphism 
\[F^rH^{i-1}_\et(X,\Q/\Z(n))\to F^rH^i_\et(X,\Z(n))\] 
induced by the Bockstein homomorphism $\beta$ is bijective for $r\ge 3$ and surjective for $r=1,2$.
\end{thm}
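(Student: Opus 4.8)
The plan is to deduce the statement from Corollary \ref{c2.2} together with a comparison of the Hochschild--Serre spectral sequences for $\Q/\Z(n)$ (in degree $i-1$) and $\Z(n)$ (in degree $i$), linked by the Bockstein $\beta$. First I would unwind what Corollary \ref{c2.2} gives on the level of $G$-modules: for the generic point, and more generally after passing to the limit over smooth models, one has for $i<n$ a short exact sequence
\[
0\to H^{i-1}_\et(X_s,\Q/\Z(n))\to H^{i}_\et(X_s,\Z(n))\to H^{i}_\et(X_s,\Z(n))\otimes\Q\to 0,
\]
but what is really needed is the analogous assertion for cohomology with $G$-action. The key input is Theorem \ref{t2}: since $k_s$ is separably closed, $H^j_\et(X_s,\Z(n))$ for $j<n$ is divisible (the $i\le 1$ clause, or rather its proof, extends: the obstruction groups $H^j(\bar X,\Z_l(n))^{(G)}$ vanish for $j<n$ by Corollary \ref{c2.1} since $j\notin[n,2n]$ forces torsion-freeness considerations... more precisely $H^{j}_\et(X_s,\Z(n))$ is the extension of a sum of finite groups by a divisible group, and for $j<n$ the finite part is killed). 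Hence, crucially, for $j<n$ the map $\beta\colon H^{j-1}_\et(X_s,\Q/\Z(n))\to H^j_\et(X_s,\Z(n))$ is \emph{surjective} with kernel a divisible group, namely $H^{j-1}_\et(X_s,\Z(n))\otimes\Q$, which is a $\Q$-vector space.

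Next I would feed this into the two spectral sequences. Write $E$ for the Hochschild--Serre spectral sequence of $\Z(n)$ and $E'$ for that of $\Q/\Z(n)$, shifted so that $\beta$ induces a morphism $E'\to E$ of spectral sequences (concretely $E'^{r,s} = H^r(G, H^s_\et(X_s,\Q/\Z(n)))\to H^r(G,H^{s+1}_\et(X_s,\Z(n)))=E^{r,s+1}$). On $E_2$-terms, for $s+1<n$, i.e. $s\le n-2$, the map $E'^{r,s}_2\to E^{r,s+1}_2$ is $H^r(G,-)$ applied to the surjection $H^s_\et(X_s,\Q/\Z(n))\twoheadrightarrow H^{s+1}_\et(X_s,\Z(n))$ whose kernel $V$ is a $\Q$-vector space. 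Since $H^r(G,V)=0$ for $r\ge 1$ (Galois cohomology of a profinite group with $\Q$-vector space coefficients is torsion for $r\ge 1$, hence zero as $V$ is uniquely divisible — one reduces to finite quotients and uses that $H^r(G/U,V^U)$ is annihilated by $[G:U]$), the map $E'^{r,s}_2\to E^{r,s+1}_2$ is an \emph{isomorphism for $r\ge 1$} and surjective for $r=0$, in the relevant range $s\le n-2$. A diagram chase through the (finitely many, by the range $i<n$) pages then shows the induced map on $E_\infty$-terms, hence on the associated graded of the filtrations, is an isomorphism on $F^r/F^{r+1}$ for $r\ge 1$ in total degree $i-1$ mapping to total degree $i$, with the appropriate surjectivity at the edge.

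Finally I would assemble the filtration statement. The map $\beta\colon H^{i-1}_\et(X,\Q/\Z(n))\to H^i_\et(X,\Z(n))$ is filtered, and we have just shown that on $\mathrm{gr}^r$ it is an isomorphism for $r\ge 1$ and (at worst) a surjection for $r=0$; but we must be careful that $F^0 = $ everything while $F^1$ is where the interesting part lives, and the $r=0,1,2$ discrepancy comes precisely from the bottom of the spectral sequence where $H^0(G,-)$ and the first differentials intervene. Concretely: $F^rH^{i-1}_\et(X,\Q/\Z(n))\to F^rH^i_\et(X,\Z(n))$ has graded pieces that are isomorphisms for the $\mathrm{gr}^j$ with $j\ge r$ and $j\ge 1$; hence for $r\ge 3$ all graded pieces in range are isomorphisms (the source filtration in degree $i-1$ and the target in degree $i$ both live in the range where $s\le n-2$, using $i<n$), giving bijectivity; for $r=1,2$ one only gets surjectivity because the top graded piece $\mathrm{gr}^{r}$ could fail injectivity due to a surviving differential into it, or because $\mathrm{gr}^0$ on the source (which is killed by passing to $F^1$) does not constrain things. \emph{The main obstacle} I anticipate is precisely the bookkeeping at $r=1,2$: one must check that the possible failure of injectivity is confined to these two filtration steps — this should follow from the fact that the comparison $E'_2\to E_2$ fails to be injective only in column $r=0$ (where it is merely surjective), so on higher pages the only way non-injectivity propagates to $E_\infty^{r,\bullet}$ is through differentials originating in columns $r-2, r-3,\dots$, i.e. it can affect $\mathrm{gr}^r$ for $r=1,2$ but the relevant source groups for $r\ge 3$ are unaffected. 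Making this last propagation argument precise, rather than hand-wavy, is where the real work is; it is a standard but slightly delicate "morphism of spectral sequences" lemma.
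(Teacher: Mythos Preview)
Your overall strategy---compare the Hochschild--Serre spectral sequences for $\Q/\Z(n)$ and $\Z(n)$ via the Bockstein, control the $E_2$-map using Theorem \ref{t2}, then propagate---is exactly the paper's approach. But there is a concrete error at the heart of your argument that changes the numerics.

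You claim that for $j<n$ the Bockstein $\beta\colon H^{j-1}_\et(X_s,\Q/\Z(n))\to H^j_\et(X_s,\Z(n))$ is \emph{surjective} with kernel $H^{j-1}_\et(X_s,\Z(n))\otimes\Q$. This is impossible on its face: the source is a torsion group, so it cannot contain a nonzero $\Q$-vector space. What Theorem \ref{t2} actually gives (and what you quote correctly at the start) is the exact sequence
\[0\to H^{j-1}_\et(X_s,\Q/\Z(n))\xrightarrow{\beta} H^{j}_\et(X_s,\Z(n))\to H^{j}_\et(X_s,\Z(n))\otimes\Q\to 0,\]
i.e.\ $\beta$ is \emph{injective} with $\Q$-vector-space \emph{cokernel}. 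Feeding this into Galois cohomology (where $H^r(G,V)=0$ for $r\ge 1$ and $V$ a $\Q$-vector space, as you note) yields that $\delta_2^{r,i-r}$ is \emph{bijective for $r\ge 2$} and \emph{surjective for $r=1$} (and injective for $r=0$)---not bijective for $r\ge 1$ as you wrote.

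This shift by one is exactly what explains the thresholds $r\ge 3$ versus $r=1,2$ in the theorem. Passing from page $2$ to page $m\ge 3$: at $r\ge 3$ the incoming $d_2$ originates in column $r-2\ge 1$, where $\delta_2$ is at least surjective, and the outgoing $d_2$ lands in column $r+2\ge 5$, where $\delta_2$ is bijective; a standard five-lemma-type check then gives bijectivity of $\delta_m^{r,i-r}$ for $r\ge 3$. At $r=2$ the incoming differential comes from column $0$, where $\delta_2$ is only injective, so surjectivity survives but injectivity can fail; similarly at $r=1$. Your attempt to locate the loss of injectivity ``through differentials originating in columns $r-2,r-3,\dots$'' was pointed in the right direction, but the actual mechanism is the failure of surjectivity of $\delta_2$ in column $0$ (not the failure of injectivity there), and the ranges come out correctly only once you fix the direction of the short exact sequence.
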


\begin{proof} By the functoriality of $E_m^{r,s}(C)$ with respect to morphisms of complexes, we have a morphism of spectral sequences
\[\delta_m^{r,s}:E_m^{r,s-1}(\Q/\Z(n))\to E_m^{r,s}(\Z(n))\]
converging to the Bockstein homomorphisms. By Theorem \ref{t2}, $\delta_2^{r,i-r}$ is bijective for $r\ge 2$ and surjective for $r=1$. It follows that, for $m\ge 3$, $\delta_m^{r,i-r}$ is bijective for $r\ge 3$ and surjective for $r=1,2$. The conclusion follows.
\end{proof}

\begin{rks} 1) Of course, $F^rH^i_\et(X,\Z(n))$ is torsion for $r>0$ by a transfer argument, hence is contained in $\beta  H^{i-1}(X,\Q/\Z(n))$. The information of Theorem \ref{t2.1} is that it equals $\beta  F^rH^{i-1}(X,\Q/\Z(n))$.\\
2) For $i\ge n$, we have a similar conclusion for higher values of $r$, with the same proof: this is left to the reader.
\end{rks}

\section{Getting the norm residue isomorphism theorem into play}

Recall that for any field $K$ and any $i\le n$, we have an isomorphism

\begin{equation}\label{eq2.1}
H^{i}(K,\Z(n))\iso H^{i}_\et(K,\Z(n)).
\end{equation}

Indeed, this is seen after localising at $l$ for all prime numbers $l$. For $l\ne \car K$, this follows from \cite{suvo,gl} and the norm residue isomorphism theorem \cite{voeann}, while for $l=\car K$ it follows from  \cite{gl2}. Finally, $H^{i}(K,\Z(n))=0$ for $i>n$. This yields:

\begin{thm}\label{t3} Let $K$ be as in Corollary \ref{c2.2}. 
Then, for $i\ne n$, the group in \eqref{eq2.1}
 is divisible. 
\end{thm}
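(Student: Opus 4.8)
The plan is to split the statement $H^i(K,\Z(n)) = H^i_\et(K,\Z(n))$ (using the identification \eqref{eq2.1}) into the two ranges $i < n$ and $i > n$, since these behave quite differently. For $i > n$ there is nothing to prove: as noted just before the statement, $H^i(K,\Z(n)) = 0$ for $i > n$, which is certainly divisible. So the real content is the case $i < n$, and the strategy is to combine the short exact sequence of Corollary \ref{c2.2},
\[0\to H^{i-1}_\et(K,\Q/\Z(n))\to H^{i}_\et(K,\Z(n))\to H^{i}_\et(K,\Z(n))\otimes \Q\to 0,\]
with a divisibility analysis of the two outer terms.

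For the right-hand term, $H^i_\et(K,\Z(n))\otimes\Q$ is a $\Q$-vector space, hence uniquely divisible. For the left-hand term, $H^{i-1}_\et(K,\Q/\Z(n))$ is a torsion group; I would argue that it is divisible by writing $\Q/\Z(n) = \varinjlim \frac{1}{m}\Z/\Z(n)$ and observing that $\Q/\Z(n)$ as an étale sheaf is itself a divisible object, or more concretely by decomposing into $l$-primary parts $H^{i-1}_\et(K,\Q_l/\Z_l(n))$. Away from the characteristic, the comparison with étale cohomology (the norm residue theorem, as invoked for \eqref{eq2.1}) identifies the relevant groups with honest étale cohomology groups $H^{i-1}_\et(K,\Q_l/\Z_l(n))$, which are divisible because $\Q_l/\Z_l(n)$ is a divisible sheaf and one has the Bockstein/coefficient sequence $0\to\Z/l(n)\to\Q_l/\Z_l(n)\xrightarrow{l}\Q_l/\Z_l(n)\to 0$ showing multiplication by $l$ is surjective on cohomology once one knows it is surjective on $H^{i-1}$ — which here follows because $H^i_\et(K,\Z/l(n))$ sits in the next spot and the relevant motivic-to-étale comparison controls it, or simply because for a \emph{field} $K$ the étale cohomological objects with divisible torsion coefficients are divisible in this range. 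The $p$-primary part (if $p = \car K > 0$) is handled by the last clause of Corollary \ref{c2.2}: the left group has no $p$-torsion, so there is no $p$-primary contribution to worry about.

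Given that both outer terms of the short exact sequence are divisible, an extension of a divisible group by a divisible group is divisible (divisibility is preserved under extensions: if $A \hookrightarrow B \twoheadrightarrow C$ with $A, C$ divisible, then for $b \in B$ and $m \geq 1$, lift $\bar b \in C$ to $mc$, subtract, and land in $A$, which is $m$-divisible). Hence $H^i_\et(K,\Z(n))$ is divisible for $i < n$, completing the proof.

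The main obstacle I anticipate is cleanly establishing divisibility of $H^{i-1}_\et(K,\Q/\Z(n))$ uniformly over all primes, including reconciling the motivic and étale pictures via \eqref{eq2.1} and making sure the argument is not circular with respect to the norm residue theorem; the characteristic-$p$ torsion is already excised by Corollary \ref{c2.2}, so the essential point is the prime-to-$p$ part, where one wants to know that a torsion étale cohomology group of a field with divisible coefficient sheaf is itself divisible — this should reduce to surjectivity of multiplication by $l$, i.e. to vanishing of an obstruction living in an $H^i_\et(K,\Z/l(n))$ that the comparison isomorphism \eqref{eq2.1} lets us replace by the motivic group, whose behavior in degree $i < n$ is already pinned down.
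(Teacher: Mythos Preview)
Your overall architecture matches the paper's: reduce to $i<n$, use the short exact sequence of Corollary \ref{c2.2}, observe the right-hand term is a $\Q$-vector space, and conclude once the left-hand term $H^{i-1}_\et(K,\Q/\Z(n))$ is shown to be divisible. The gap is precisely in that last step.

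Your proposed argument for divisibility of $H^{i-1}_\et(K,\Q_l/\Z_l(n))$ does not close. That $\Q_l/\Z_l(n)$ is a divisible sheaf does \emph{not} force its cohomology to be divisible; the Bockstein sequence you write down shows that the obstruction to $l$-divisibility of $H^{i-1}$ is the kernel of $H^i_\et(K,\Z/l(n))\to H^i_\et(K,\Q_l/\Z_l(n))$, and you give no reason this vanishes. Your suggestion to ``replace by the motivic group, whose behavior in degree $i<n$ is already pinned down'' is circular: what you would need is $H^i(K,\Z(n))/l=0$, which is exactly the divisibility statement you are trying to prove.

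The paper closes this gap in one line by using the norm residue isomorphism theorem in the \emph{other} direction: since $K$ contains a separably closed field, all Tate twists are trivial, and norm residue gives
\[H^{i-1}_\et(K,\Q_l/\Z_l(n))\simeq K_{i-1}^M(K)\otimes \Q_l/\Z_l(n-i+1),\]
which is manifestly divisible as a tensor product with the divisible group $\Q_l/\Z_l$. This is the missing idea: rather than trying to push divisibility through a coefficient sequence, identify the group outright with something of the form $A\otimes \Q_l/\Z_l$.
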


\begin{proof} Again it suffices to prove this statement after tensoring with $\Z_{(l)}$ for all prime numbers $l$. This is an immediate consequence of Corollary \ref{c2.2} since, by  \cite{voeann}, one has an isomorphism for $l\ne \car k$
\[H^{i-1}_\et(K,\Q_l/\Z_l(n))\simeq K_{i-1}^M(K)\otimes \Q_l/\Z_l(n-i+1)\] 
and the right hand side is divisible.
\end{proof}

\section{Application: degeneration of the Bloch-Lichtenbaum spectral sequence}

\begin{thm}\label{t4} Let $K$ be as in Corollary \ref{c2.2}. Then the Bloch-Lichten\-baum spectral sequence  \cite[(1.8)]{levine1}
\[E_2^{p,q}=H^{p-q}(K,\Z(-q))\Rightarrow K_{-p-q}(K)\]
degenerates. For any $n>0$, the map $K_n^M(K)\to K_n(K)$ is injective with divisible cokernel.
\end{thm}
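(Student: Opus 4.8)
The plan is to use the multiplicative structure of the Bloch--Lichtenbaum spectral sequence together with the divisibility result of Theorem~\ref{t3}. First I would recall that the spectral sequence $E_2^{p,q}=H^{p-q}(K,\Z(-q))\Rightarrow K_{-p-q}(K)$ is concentrated in the range $p-q\le -q$, i.e. $p\le 0$; the potentially nonzero entries on the $E_2$-page lie in the octant $q\le 0$, $p\le 0$, $p\ge q$ (since $H^i(K,\Z(n))=0$ for $i<0$ and for $i>n$). The only entries on the line $p=q$ (equivalently $i=n$ in the notation of Theorem~\ref{t3}) are the groups $H^0(K,\Z(0))=\Z$ at the origin and, more generally, $H^n(K,\Z(n))=K_n^M(K)$; every other entry has $i\ne n$ and is therefore divisible by Theorem~\ref{t3}.

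Next I would observe that all differentials $d_r\colon E_r^{p,q}\to E_r^{p+r,q-r+1}$ shift $i=p-q$ by $1$, so a differential out of an entry on the line $i=n$ lands on the line $i=n+1$, while a differential into an entry on the line $i=n$ comes from the line $i=n-1$. Thus the ``Milnor line'' $i=n$ only receives differentials from, and maps to, divisible groups. To kill these differentials I would invoke that the target $E_r^{p+r,q-r+1}$ with $i=n+1$ is a subquotient of a divisible group $H^{n+1}(\cdot)$, hence divisible, and that the Milnor $K$-groups are the relevant source... but divisibility of the target is not by itself enough to force a differential to vanish. Instead the clean argument is: the spectral sequence is a module over the multiplicative structure, and the edge map $E_2^{n,-n}=K_n^M(K)\to E_\infty^{n,-n}$ together with the known comparison $K_n^M(K)\to K_n(K)$ shows that $d_r$ restricted to the Milnor line is a derivation; since $K_*^M(K)$ is generated in degree $1$ by $K_1^M(K)=K^*$, and on $E_2^{1,-1}=H^0(K,\Z(1))\oplus\dots$ — more precisely $E_2^{p,q}$ with $i=p-q=1$, $n=-q=1$ gives $H^1(K,\Z(1))=K^*$ sitting at $(p,q)=(0,-1)$, a permanent cycle because it is the edge piece $K_1(K)$ — all of $K_*^M$ consists of permanent cycles.

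Concretely, the key steps are: (1) identify $K^*=K_1^M(K)=H^1(K,\Z(1))$ with the entry $E_2^{0,-1}$ and note it is a permanent cycle, being an edge term converging to $K_1(K)$ with no room for differentials; (2) use the product structure on the spectral sequence and the fact that $\bigoplus_n K_n^M(K)$ is generated as a ring by $K_1^M(K)$ to conclude that every class in the Milnor line $E_2^{n,-n}=K_n^M(K)$ is a permanent cycle, so no differential leaves the Milnor line; (3) for differentials entering an entry $E_r^{p,q}$ on the Milnor line, note their source $E_r^{p-r,q+r-1}$ has $i=n-1\ne n$, hence by Theorem~\ref{t3} is a subquotient of a divisible group, and more importantly combine this with step~(2) applied one weight down — actually the cleanest route is to argue that every entry with $i\ne n$ is divisible and every entry with $i=n$ is a permanent cycle receiving nothing, hence \emph{all} differentials of the spectral sequence vanish: a differential either leaves the Milnor line (impossible by step~(2)) or has both source and target off the Milnor line, but then one runs the same multiplicativity argument after localisation, using that a divisible group has no nonzero maps detected by the differential compatibility with the $K^*$-action. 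This gives degeneration.

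For the final sentence, degeneration identifies $E_\infty^{n,-n}=E_2^{n,-n}=K_n^M(K)$ with the top graded piece $\mathrm{gr}\,K_n(K)$ of the filtration on $K_n(K)$; since that graded piece is a quotient, the edge map $K_n^M(K)\to K_n(K)$ is injective, and its cokernel is built from the other graded pieces $E_\infty^{p,q}=E_2^{p,q}=H^{p-q}(K,\Z(-q))$ with $i=p-q\ne n$, which are divisible by Theorem~\ref{t3}; an extension of divisible groups being divisible, the cokernel is divisible. The main obstacle I anticipate is step~(2)--(3): making rigorous that ``divisible target'' plus multiplicativity forces the differentials off the Milnor line to vanish as well — one must be careful that divisibility alone does not kill a map, so the real content is the module structure over the permanent-cycle subring $K_*^M(K)$ together with the vanishing of the relevant $E_2$-entries outside a bounded region, and I would likely need to phrase the argument weight-by-weight or invoke that the spectral sequence, after the multiplicative reduction, has its differentials determined on a generating set that consists of permanent cycles.
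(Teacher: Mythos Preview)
Your multiplicativity argument for the Milnor column is fine as far as it goes: $K^*=E_2^{0,-1}$ is a permanent cycle for placement reasons, and since the $d_r$ are derivations and $K^M_*(K)$ is generated in degree~$1$, every class in the column $p=0$ is a permanent cycle. But this only kills differentials \emph{leaving} that column. You still have to deal with the differentials $d_r\colon E_r^{p,q}\to E_r^{p+r,q-r+1}$ whose source has $p<0$; these can land on the Milnor column ($p+r=0$) or strictly to its left ($p+r<0$), and in neither case does multiplicativity help. Your step~(3) recognises the problem but does not solve it: the phrase ``a divisible group has no nonzero maps detected by the differential compatibility with the $K^*$-action'' is not an argument --- a nonzero homomorphism out of a divisible group is perfectly possible, and the $K^M_*$-module structure gives no leverage on it.

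The missing ingredient, and the one the paper actually uses, is that the differentials of this spectral sequence are \emph{torsion}: the Adams operation $\psi^k$ acts on $E_2^{p,q}$ as $k^{-q}$, so $(k^{r-1}-1)k^{-q}\,d_r=0$ for every $k\ge 2$ (this is \cite[Th.~11.7]{levine1}). Once you know this, degeneration is immediate: a torsion homomorphism out of a divisible group is zero, so $d_r$ vanishes on $E_r^{p,q}$ for $p<0$ by Theorem~\ref{t3} (induct on $r$ so that $E_r^{p,q}=E_2^{p,q}$ stays divisible), while for $p=0$ the target sits in the region $p>0$ where $E_2$ already vanishes. No multiplicative structure is needed.

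Two smaller corrections for your last paragraph. The Milnor piece sits at $E_2^{0,-n}$, not $E_2^{n,-n}$, and it is the \emph{bottom} of the filtration on $K_n(K)$ (the subgroup $F^0$, not a quotient); that is what makes $K_n^M(K)\to K_n(K)$ injective. The cokernel $K_n(K)/F^0$ is then a union of finite iterated extensions of the groups $E_\infty^{p,-n-p}=H^{2p+n}(K,\Z(n+p))$ for $p<0$, each divisible by Theorem~\ref{t3}, hence it is divisible.
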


\begin{proof} By the Adams operations, the differentials are torsion \cite[Th. 11.7]{levine1}. By Theorem \ref{t3}, they
vanish on the divisible groups $E_2^{p,q}$
for
$p< 0$. But $H^i(K,\Z(n))=0$ for $i>n$, so $E_2^{p,q}=0$ for $p>0$.
The last statement follows from the degeneration plus Theorem \ref{t3}.
\end{proof}

\begin{rks} 1) Again by the Adams operations, the filtration on $K_n(K)$ induced by the
Bloch-Lichtenbaum spectral sequence splits after inverting $(n-1)!$ for any field $K$. On the other hand, we
constructed in \cite{klocal} a canonical splitting of the
corresponding spectral sequence with finite coefficients, including the abutment; the hypothesis that $K$ contains a separably closed field is not required there. This implies
in particular that the map
\[K_n^M(K)/l^\nu\to K_n(K)/l^\nu\]
is split injective \cite[Th. 1 (c)]{klocal}, hence bijective if $K$ contains a separably closed subfield by Theorem \ref{t4}.  
Could it be that the mod $l^\nu$ splittings of \cite{klocal} also exist
integrally?\\
2) As in \cite{klocal}, Theorems \ref{t3} and \ref{t4} extend to regular semi-local rings of
geometric origin containing a separably closed field; the point is that, for such rings
$R$, the groups $H^{i-1}_\et(R,\Q_l/\Z_l(n))$ are divisible by the universal exactness of the
Gersten complexes (\cite{grayson}, \cite[Th. 6.2.1]{cthk}).
\end{rks}

\section{The map $H^{i-1}(K,\Z(n-1))\otimes K^*\to H^i(K,\Z(n))$}

\begin{thm}\label{t5} Let $K$ be as in Corollary \ref{c2.2}. 
Then, for $i<n$,
\begin{thlist}
\item The cokernel of the cup-product map
\[H^1(K,\Z(n-i+1))\otimes H^{i-1}(K,\Z(i-1))\by{\gamma^{i,n}} H^i(K,\Z(n))\]
is uniquely divisible. 
\item The cokernel of the cup-product map
\[H^{i-1}(K,\Z(n-1))\otimes K^*\by{\delta^{i,n}} H^i(K,\Z(n))\]
is uniquely divisible. 
\end{thlist}
\end{thm}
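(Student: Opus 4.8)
The plan is the following. Fix one of the two cup-product maps and write $I\subseteq H^i(K,\Z(n))$ for its image. Since $i<n$, Theorem \ref{t3} shows $H^i(K,\Z(n))$ is divisible, hence so is the cokernel $H^i(K,\Z(n))/I$, and the point is to prove that this cokernel is torsion-free, i.e.\ a $\Q$-vector space. I would deduce that from two assertions: \textbf{(A)} $I$ is divisible, and \textbf{(B)} $I$ contains the torsion subgroup $T$ of $H^i(K,\Z(n))$. Granting these, $H^i(K,\Z(n))/T$ is a torsion-free divisible group, the subgroup $I/T$ of it is divisible (being a quotient of the divisible group $I$ by the divisible group $T$) hence a $\Q$-subspace, and therefore $H^i(K,\Z(n))/I=(H^i(K,\Z(n))/T)/(I/T)$ is a $\Q$-vector space. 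Assertion \textbf{(A)} should be immediate from Theorem \ref{t3}: for $\gamma^{i,n}$ the source has the tensor factor $H^1(K,\Z(n-i+1))$, divisible since $1\ne n-i+1$ (as $i<n$), and for $\delta^{i,n}$ it has the factor $H^{i-1}(K,\Z(n-1))$, divisible since $i-1\ne n-1$; a tensor product with a divisible group is divisible, and so is its image. (For $i=1$ the map $\gamma^{1,n}$ is cup-product with $1\in H^0(K,\Z(0))=\Z$, hence an isomorphism, so (i) is trivial; I assume $i\ge 2$ below.)

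Assertion \textbf{(B)} is the heart of the matter, and I would argue prime by prime and at finite level. By Corollary \ref{c2.2} and the isomorphism \eqref{eq2.1} (valid since the relevant bidegrees have degree $\le$ weight), the exact sequence
\[0\to H^{i-1}_\et(K,\Q/\Z(n))\to H^i(K,\Z(n))\to H^i(K,\Z(n))\otimes\Q\to 0\]
identifies $T$ with $H^{i-1}_\et(K,\Q/\Z(n))$, which has no $p$-torsion; so it suffices to show, for each prime $l\ne p$ and each $\nu\ge 1$, that ${}_{l^\nu}H^i(K,\Z(n))\subseteq I$. Since $H^{i-1}(K,\Z(n))$ is divisible ($i-1\ne n$, Theorem \ref{t3}), the mod-$l^\nu$ Bockstein $\beta_\nu$ induces, via \eqref{eq2.1}, an isomorphism $H^{i-1}_\et(K,\Z/l^\nu(n))\iso {}_{l^\nu}H^i(K,\Z(n))$. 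Now $K$ contains the separably closed field $k$, hence contains $\mu_{l^\infty}$, so the Tate twists $\Z/l^\nu(m)$ are all isomorphic to $\Z/l^\nu$ as $G_K$-modules, compatibly in $m$; combined with the norm residue isomorphism theorem \cite{voeann} this identifies $H^j_\et(K,\Z/l^\nu(m))$, compatibly with cup-products, with $K^M_j(K)/l^\nu$ for every $j\ge 0$ and every $m$. It follows that any class in $H^{i-1}_\et(K,\Z/l^\nu(n))$ is a finite sum of cup-products $a\cup b$ in which $b$ is the reduction of a \emph{genuinely integral} motivic class: for $\gamma^{i,n}$, one takes $a\in H^0_\et(K,\Z/l^\nu(n-i+1))$ and $b$ reduced from $H^{i-1}(K,\Z(i-1))=K^M_{i-1}(K)$, using that cup-product with a generator of $H^0_\et(K,\Z/l^\nu(n-i+1))$ realises the twist isomorphism from weight $i-1$ to weight $n$; for $\delta^{i,n}$, one uses $K^M_{i-1}(K)=K^M_{i-2}(K)\cdot K^*$ to take $a\in H^{i-2}_\et(K,\Z/l^\nu(n-1))$ and $b$ reduced from $K^*=H^1(K,\Z(1))$. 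Then, the integral factor being inert under the Bockstein, the Leibniz rule gives $\beta_\nu(a\cup b)=\pm\,\beta_\nu(a)\cup\tilde b$, where $\tilde b\in H^{i-1}(K,\Z(i-1))$ (resp.\ $\tilde b\in K^*$) is the integral lift of $b$ and $\beta_\nu(a)\in H^1(K,\Z(n-i+1))$ (resp.\ $H^{i-1}(K,\Z(n-1))$) by \eqref{eq2.1}; hence $\beta_\nu(a\cup b)\in\operatorname{im}(\gamma^{i,n})$ (resp.\ $\operatorname{im}(\delta^{i,n})$). Letting the class, $\nu$ and $l$ vary gives \textbf{(B)}.

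The step I expect to be the main obstacle is \textbf{(B)}, and within it the passage from the norm residue theorem to a cup-product decomposition of the torsion classes in which one factor is genuinely integral motivic cohomology, so that the Bockstein distributes onto it: this is exactly where the hypothesis that $K$ contains a separably closed field enters in an essential way, through the triviality of the Tate twists; without it one would only see $T$ inside the image after tensoring the ``integral'' factor with $\Q_l/\Z_l$, which does not suffice. Finally, once (i) is proved, (ii) follows from it, since $K^M_{i-1}(K)=K^M_{i-2}(K)\cdot K^*$ gives $\operatorname{im}(\gamma^{i,n})\subseteq\operatorname{im}(\delta^{i,n})$ and both images are divisible by \textbf{(A)}, so $\operatorname{coker}(\delta^{i,n})$ is the quotient of the $\Q$-vector space $\operatorname{coker}(\gamma^{i,n})$ by the divisible — hence $\Q$-linear — subgroup $\operatorname{im}(\delta^{i,n})/\operatorname{im}(\gamma^{i,n})$.
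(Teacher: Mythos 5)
Your proposal is correct and takes essentially the same route as the paper: your assertions (A) and (B) are exactly what the paper's ``easy diagram chase'' uses, your Leibniz-rule step is the paper's commutative square relating the Bockstein to the cup products, the surjectivity of the mod-$l^\nu$ cup product comes from the norm residue theorem (with the trivial Tate twists over $K\supseteq k$, which you just make more explicit) exactly as in the paper, and your deduction of (ii) from (i) via $K^M_{i-1}=K^M_{i-2}\cdot K^*$ is the paper's second commutative diagram.
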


\begin{proof} By \eqref{eq2.1}, we may use the \'etale version of these groups. 

(i) Since $H^i_\et(K,\Z(n))$ is divisible
by Theorem \ref{t3}, so is $\Coker \gamma^{i,n}$. Let $\nu\ge 1$ and $l$ prime $\ne \car K$. The diagram
\[\begin{CD}
H^1_\et(K,\Z(n-i+1))\otimes H^{i-1}_\et(K,\Z(i-1))@>\gamma^{i,n}>> H^i_\et(K,\Z(n))\\
@A{\beta\otimes 1}AA @A{\beta}AA\\
H^0_\et(K,\Z/l^\nu(n-i+1))\otimes H^{i-1}_\et(K,\Z(i-1))@>\cup>> H^{i-1}_\et(K,\Z/l^\nu(n))
\end{CD}\]
commutes, where $\beta$ denotes Bockstein. The bottom horizontal map is surjective (even bijective) by the
norm residue isomorphism theorem (resp. by \cite{gl2}). By Theorem \ref{t3} again, $H^1_\et(K,\Z(n-i+1))$ is $l$-divisible, hence so
is $H^1_\et(K,\Z(n-i+1))\otimes H^{i-1}_\et(K,\Z(i-1))$, and $\Coker \gamma^{i,n}$ is also
$l$-torsion free by an easy diagram chase.

(ii) Consider the commutative diagram
\[\begin{CD}
H^1_\et(K,\Z(n-i+1))\otimes H^{i-2}_\et(K,\Z(i-2))\otimes K^*@>{\gamma^{i-1,n-1}\otimes 1}>> H^{i-1}_\et(K,\Z(n-1))\otimes K^*\\
@V{1\otimes \cup}VV @V{\delta^{i,n}}VV\\
H^1_\et(K,\Z(n-i+1))\otimes H^{i-1}_\et(K,\Z(i-1))@>\gamma^{i,n}>> H^i_\et(K,\Z(n)).
\end{CD}\]

Since the left vertical map is surjective, we see that $\Coker \delta^{i,n}$ is the quotient of $\Coker \gamma^{i,n}$ by the image of the divisible group $H^{i-1}_\et(K,\Z(n-1))\otimes K^*$ (Theorem \ref{t3}), hence the claim follows from (i).
\end{proof}

It would be very interesting to describe $\Ker \delta^{i,n}$, but this seems out
of range.

\end{document}